\newtheorem{theorem}{Theorem}[section]
\newtheorem{corollary}[theorem]{Corollary}
\newtheorem{lemma}[theorem]{Lemma}
\newtheorem{proposition}[theorem]{Proposition}
\newtheorem{problem}[theorem]{Problem}
\theoremstyle{definition}
\newtheorem{definition}[theorem]{Definition}
\newtheorem{example}[theorem]{Example}
\theoremstyle{remark}
\numberwithin{equation}{section}
\subjclass[2010]{37B05,	37B10, 37B40, 37B50}%
\keywords{Shift Spaces, Topological Entropy, Independence Entropy}%
\begin{document}
%

% Title author and all that
\title[ A Conjugacy Invariant from Independence Entropy]{Extracting Invariants of Conjugacy from Independence Entropy}
\author{Bashir Abu Khalil}
\maketitle

\begin{abstract}
The concept of independence entropy for symbolic dynamical systems was introduced in \cite{LMP13}. This notion of entropy measures the extent to which one can freely insert symbols in positions without violating the constraints defined by the shift space. Independence entropy is not invariant under topological conjugacy. We define an invariant version of the independence entropy by setting $ \sup h_{ind} (X) = \sup\{h_{ind}(Y)|Y\simeq X\}.$ This invariant is bounded above by the topological entropy. We prove that equality $ \sup h_{ind} (X)=h(X)$ holds for all sofic shift spaces over $\mathbb{Z},$ then we give an example showing that equality does not hold for general shift spaces.
\end{abstract}

\section{Introduction} \label{sec:intro}
We are concerned with invariants of shift spaces ${(X,\sigma)}\subseteq(\mathbf{A}^{\mathbb{Z}^d},\sigma),$ where $\mathbf{A}$ is a finite alphabet and $\sigma$ is the shift. Topological entropy is the most fundamental numerical invariant associated to a $d$-dimensional shift space. When $d= 1$ and $X$ is either of finite type (SFT), or a sofic shift, the topological entropy is easy to compute as the $\log$ of the largest eigenvalue of a non-negative integer matrix. However, when $d = 2$ there is no known explicit expression for the topological entropy of SFTs,  even for the simplest non-trivial examples. The notion of independence entropy of a shift space defined in \cite{LMP13} comes to measure the part of the entropy resulting from inter-symbol independence in elements of the shift space. The notion of independence entropy is not invariant under topological conjugacy. Still we can define an invariant notion:  \begin{center}
 $ \sup h_{ind} (X) = \sup\{h_{ind}(Y)|Y\simeq X\},$ \end{center} which is bounded above by the topological entropy.
 
  The aim of this paper is to prove the following theorems:
\begin{theorem} \label{thm:main1}
Let $X$ be a one-dimensional sofic shift over an alphabet $\mathbf{A}$. Then $ \sup h_{ind} (X)=h(X)$. 
\end{theorem}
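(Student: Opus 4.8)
The inequality $\sup h_{ind}(X)\le h(X)$ is immediate: every free box $\prod_i\Lambda_i\subseteq\mathcal L_n(Y)$ has $\prod_i|\Lambda_i|\le|\mathcal L_n(Y)|$, so $h_{ind}(Y)\le h(Y)$, and $h(Y)=h(X)$ since topological entropy is a conjugacy invariant. So the whole point is to produce, given $\epsilon>0$, a shift $Y\simeq X$ with $h_{ind}(Y)>h(X)-\epsilon$. I would first reduce to the case that $X$ is irreducible: $h(X)$ is attained on some irreducible component, and since the recoding built below acts by a local invertible rule it extends from that component to a conjugacy of all of $X$. Fix an irreducible presentation $G$ of $X$ with adjacency matrix $A$, so $h(X)=\log\lambda$ with $\lambda$ the Perron eigenvalue of $A$.

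The basic resource is a large family of freely concatenable blocks. Fix a vertex $v$ of $G$; for $N$ a large multiple of the period of $A$, let $\mathcal C_N\subseteq\mathcal L_N(X)$ be the set of labels of length-$N$ cycles at $v$. Because $G$ is right-resolving, distinct cycles at $v$ carry distinct labels, so $|\mathcal C_N|=(A^N)_{vv}$, and by Perron--Frobenius $(A^N)_{vv}^{1/N}\to\lambda$; moreover every concatenation of words from $\mathcal C_N$ lies in $\mathcal L(X)$ and extends to a point of $X$. I would then build a ``model'' shift $\mathcal M$ with these cycle words installed as an \emph{independent} structure: put $k=\max(2,\lceil\lambda\rceil)$, choose $d\le N$ with $\lambda^{(1-\epsilon/2)N}\le k^{d}\le|\mathcal C_N|$, pick $\mathcal C'_N\subseteq\mathcal C_N$ with $|\mathcal C'_N|=k^{d}$, fix a bijection $\beta$ of $\mathcal C'_N$ onto the product box $\mathcal W=\{0,\dots,k-1\}^{d}\times\{0\}^{N-d}$, and fix an unbordered marker word $\nu$ of some fixed (large enough) length $m$. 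Over an alphabet $\mathbf B$ containing $\mathbf A$, the letters of $\nu$, and $\{0,\dots,k-1\}$, let $\mathcal M$ be the subshift of all bi-infinite concatenations of the bricks $\nu\,w$, $w\in\mathcal W$. For every $n$, repeating the pattern $\nu\cdot\{0,\dots,k-1\}^{d}\cdot\{0\}^{N-d}$ exhibits a free box in $\mathcal L_n(\mathcal M)$, whence
\[
h_{ind}(\mathcal M)\ \ge\ \frac{\log k^{d}}{m+N}\ \ge\ \frac{(1-\epsilon/2)\,N\log\lambda}{m+N}\ >\ h(X)-\epsilon
\]
once $N$ is large. One also checks $h(\mathcal M)=\tfrac{\log k^{d}}{m+N}<h(X)$ and that the number of points of any period in $\mathcal M$ is dominated by that in $X$: the only periods present in $\mathcal M$ are multiples of $L=m+N$, and for period $jL$ the count is $O(jL\,k^{jd})\le\lambda^{jL}$, since $k^{d}$ sits a factor $\lambda^{\Theta(N)}$ below $\lambda^{L}$.

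The heart of the proof is to realize $\mathcal M$ \emph{literally} as a subshift of some $Y\simeq X$. By Krieger's embedding theorem---whose hypotheses are exactly $h(\mathcal M)<h(X)$ together with the periodic-point domination just verified (in the non-mixing case one also takes $L$ divisible by the period of $G$)---$\mathcal M$ embeds into $X$. I would then construct the conjugacy $\psi\colon X\to Y$ hands-on, in the style of that theorem's proof: fix a synchronizing marker $\mu$ of $X$ whose only occurrences inside a bi-infinite concatenation of blocks $\mu\,c$ ($c\in\mathcal C'_N$) are the obvious ones, use occurrences of $\mu$ to delimit slots, and let $\psi$ act by the local rule that recodes the contents of a slot reading $\mu\,c_1c_2\cdots$ by replacing each $c_i$ with $\beta^{-1}(c_i)$ and recodes everything else verbatim into $\mathbf B$; the inverse rule decodes. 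Being a local invertible rule, $\psi$ is a conjugacy onto a shift space $Y$, and it carries the ``scaffold'' of $X$---the points that are bi-infinite concatenations of the blocks $\mu\,c$---onto a copy of $\mathcal M$ inside $Y$. Hence $h_{ind}(Y)\ge h_{ind}(\mathcal M)>h(X)-\epsilon$, and letting $\epsilon\downarrow0$ and combining with the trivial bound gives $\sup h_{ind}(X)=h(X)$.

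The step I expect to be the genuine obstacle is the last one: making $\psi$ simultaneously well defined and bijective on \emph{all} of $X$, not just on the scaffold---which forces a delicate choice of $\mu$ so that slot boundaries are unambiguously detectable even in points not built from the blocks---and keeping the periodic-point bookkeeping exact, as required for $Y\simeq X$. It is precisely the finiteness of a sofic presentation that makes this possible: it supplies both the abundance of usable markers and the regular $\lambda^{n}$-type growth of $\mathcal L_n(X)$ that feeds the Perron--Frobenius estimate. This is exactly what the shift of the following section lacks, which is why there $\sup h_{ind}$ drops strictly below $h$.
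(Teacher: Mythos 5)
Your overall architecture is the same as the paper's: isolate a large family of freely concatenable blocks in an irreducible component carrying the full entropy, recode those blocks by an injective local rule into a new alphabet where the choice of block becomes coordinatewise independent, and then bound $h_{ind}$ of the image from below by exhibiting an explicit free box (the paper concentrates all $|\Gamma_K|$ choices into a single super-symbol followed by $\ast$'s, you spread $k^d$ choices over $d$ digit positions; for $h_{ind}$ these are equivalent). The Krieger/periodic-point bookkeeping is a red herring --- an abstract embedding of $\mathcal M$ into $X$ gives no control on $h_{ind}$, and you correctly abandon it for the hands-on recoding --- so the entire burden of proof sits on the step you yourself flag as ``the genuine obstacle'': making the brick-recoding rule $\psi$ well defined and injective on \emph{all} of $X$. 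That step is not resolved in your write-up, and it is exactly where the paper does its real work. Concretely, two things can go wrong: (i) two detected bricks can overlap in an arbitrary point of $X$, so the local rule assigns conflicting outputs to one coordinate; (ii) even on the scaffold, a spurious occurrence of the marker inside some $c_i$ can trigger a false brick and destroy both the identity $\psi(\text{scaffold})=\mathcal M$ and invertibility. Point (ii) is more serious than ``a delicate choice of $\mu$'': your family $\mathcal C_N'$ has cardinality about $\lambda^{(1-\epsilon/2)N}$, i.e.\ positive entropy density, so \emph{every} fixed word $\mu$ of bounded length occurs inside most members of $\mathcal C_N$; you would have to prune $\mathcal C_N'$ to cycle words avoiding $\mu$ and then re-prove that enough survive, which is a nontrivial counting argument you have not supplied.

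The paper closes this gap with a construction you should compare against: it prefixes every block of $\Gamma_K$ with a word $MSC$, where $C\in B_k$ is chosen (by a counting argument, $|B_k|>n-k+1$) \emph{not} to occur as a subword of $M$, and the blocks are then punctuated internally by further copies of $C$. A short periodicity argument (two overlapping copies of $MSC$ force $MSC$ to be $q$-periodic, which plants a copy of $C$ inside $M$) shows that distinct words of $\Gamma_K$ can never overlap, so the recoding $\Psi$ is well defined and injective, and the free multi-choice word $W_K\{\ast\}\cdots\{\ast\}$ is legal. In other words, the marker is manufactured \emph{from} the block family rather than fixed in advance, which is what makes the unambiguity provable. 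Until you supply an argument of this kind (or an equivalent unbordered-marker lemma with the accompanying entropy count for the pruned cycle family), your proof is incomplete at its decisive step.
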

\begin{theorem} \label{thm:main2}
There exists a one-dimensional shift space $X$ with $h(X)>0$ and $ \sup h_{ind} (X)=0$. 
\end{theorem}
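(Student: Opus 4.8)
The plan is to exhibit such an $X$ by a multi-scale construction that forces positive topological entropy, and then to derive $\sup h_{ind}(X)=0$ from one combinatorial feature of $X$ that is visibly stable under recoding. For a subshift $Z\subseteq\mathbf{A}^{\mathbb{Z}}$ write $\mathcal{B}_n(Z)$ for the set of length-$n$ words occurring in $Z$, and for $\ell\geq1$, $d\geq0$ call the following an \emph{$\ell$-window box of dimension $d$} in $\mathcal{B}_n(Z)$: pairwise disjoint length-$\ell$ intervals $W_1,\dots,W_d\subseteq\{1,\dots,n\}$, words $\alpha_j\neq\beta_j\in\mathbf{A}^{\ell}$, and a fixed word on $\{1,\dots,n\}\setminus\bigcup_jW_j$, such that each of the $2^d$ words obtained by placing some $\alpha_j$ or $\beta_j$ on each $W_j$ lies in $\mathcal{B}_n(Z)$. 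The feature of $X$ to be proved is: \emph{for every fixed $\ell$ and every $c>0$ there is $n_0$ so that for all $n\geq n_0$, $\mathcal{B}_n(X)$ contains no $\ell$-window box of dimension $\geq cn$} --- and in fact the construction will give the stronger conclusion that, for $n$ large, $\mathcal{B}_n(X)$ has no $\ell$-window box of dimension $\geq1$ at all.

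Granting this feature, let $Y\simeq X$ and fix a conjugacy $\psi\colon Y\to X$ together with $r$ so large that $\psi$ and $\psi^{-1}$ are both determined by radius-$r$ windows. If $h_{ind}(Y)>0$ then, for some $c_0>0$ and infinitely many $n$, there is an independence word $w$ of length $n$ with $\prod_i|w_i|\geq e^{c_0n}$; since $|w_i|\leq|\mathbf{A}|$, at least $c_0 n/\log|\mathbf{A}|$ of the coordinates carry $|w_i|\geq2$, and selecting two symbols in each such coordinate gives an ordinary product box in $\mathcal{B}_n(Y)$ with that many free coordinates. Discarding free coordinates within $2r$ of the ends and then keeping only every $(4r+2)$-nd of the rest leaves $\geq c_1n$ free coordinates that are pairwise more than $4r$ apart. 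Applying $\psi$: the word induced on the radius-$r$ window about such a free coordinate depends only on which of its two symbols is present, it takes two distinct values there (else two distinct points of $Y$ would have equal image, contradicting injectivity of $\psi$), and off those windows the image does not depend on the choices. This is an $\ell$-window box in $\mathcal{B}_{n-2r}(X)$ with $\ell=2r+1$ and dimension $\geq c_1n$, contradicting the feature above for $n$ large. Hence $h_{ind}(Y)=0$ for every $Y\simeq X$, i.e.\ $\sup h_{ind}(X)=0$.

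For the construction I would build $X$ from a tower of long-range ``parity checks'' at rapidly growing scales. Take $\mathbf{A}$ a finite abelian group of large order, block lengths $L_0=1$ and $L_k=r_kL_{k-1}$ with $r_k\to\infty$ fast enough that $\sum_k L_k^{-1}<\infty$, and for each $k\geq1$ a subset $S_k\subseteq\{1,\dots,L_k\}$; a length-$L_k$ word is an \emph{admissible level-$k$ block} if it is a concatenation of $r_k$ admissible level-$(k-1)$ blocks whose symbols in positions $S_k$ sum to $0$ in $\mathbf{A}$. Let $X$ consist of those $x$ that admit a bi-infinite hierarchical parsing in which every block at every level is admissible, equipped with a marker scheme making the parsing recognizable; then $X$ is closed and shift-invariant. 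Each check is balanced, so $L_k^{-1}\log\#\{\text{admissible level-}k\text{ blocks}\}=\log|\mathbf{A}|\bigl(1-\sum_{j\leq k}L_j^{-1}\bigr)$, whence $h(X)=\log|\mathbf{A}|\bigl(1-\sum_jL_j^{-1}\bigr)>0$ once $\sum_jL_j^{-1}<1$, which holds after a harmless truncation of the first few checks. For the window-box feature, fix $\ell$ and let $k$ be minimal with $L_{k-1}>\ell$; for $n$ large, any length-$\ell$ sub-window of a length-$n$ word of $X$ lies inside at most two consecutive admissible level-$(k-1)$ blocks. In an $\ell$-window box the exterior is fixed, so each such level-$(k-1)$ block is forced to keep its ``charge'' --- its contribution to each ancestor check $\Phi_k,\dots,\Phi_K$ up to the coarsest level $K$ with $L_K\geq n$. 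A two-way window would then produce two admissible level-$(k-1)$ blocks agreeing off a length-$\ell$ window and having identical internal checks and identical charge; their difference is a word of weight $\leq\ell$ annihilated by the combined system of internal checks and charge checks, of which there are more than $\ell$ once $K$ (hence $n$) is large. Provided the $S_k$ are chosen so that this combined system is the parity-check set of a code of minimum distance $>\ell$ at every level and every location --- arrangeable because there are infinitely many scales --- the difference must vanish, the window has a unique completion, and no $\ell$-window box of positive dimension exists.

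The crux is precisely this design requirement on the checks: one must choose the nested sets $S_k$ so that the joint syndromes of the checks determine every short window against a fixed exterior, \emph{simultaneously at all scales and for all $\ell$} --- rigidity of every short window, not merely of single symbols --- while keeping each check balanced so that entropy is not lost, and while keeping the hierarchical parsing recognizable. (That spreading the checks over all scales is necessary is illustrated by the sofic example $\{(b_i,c_i)_i:c_{i+1}\equiv c_i+b_i\pmod{N}\}$, which has $h_{ind}=0$ but whose long windows admit two completions with equal checksum, hence positive-density window boxes --- and indeed some recoding of it realizes positive independence entropy.) The remaining points --- that the set defined above is genuinely a subshift whose topological entropy equals the block-count rate, and the boundary bookkeeping in the construction and in the pushforward step of the reduction --- are routine.
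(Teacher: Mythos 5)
Your reduction step is sound in outline and is essentially the paper's argument in quantitative disguise: the statement ``no $\ell$-window box of dimension $\geq 1$ in $B_n(X)$ for all large $n$ and all $\ell$'' is (by a routine compactness argument) equivalent to ``$X$ has no non-trivial asymptotic pairs,'' and the paper proves exactly your two ingredients as Lemma \ref{lem 5} ($h_{ind}>0$ forces a pair of points differing in one coordinate) together with the trivial observation that sliding block codes with sliding block inverses preserve the property of differing in finitely many coordinates. (One small repair: your parenthetical ``else two distinct points of $Y$ would have equal image'' conflates words with points; the correct fix is that $\psi^{-1}$ also has radius $r$, so at most boundedly many length-$n$ words of $Y$ can share an image word, hence most free coordinates survive — and you only need one.)

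The genuine gap is in the construction, which the paper does not attempt: it invokes the example of Meyerovitch (\cite{Meyerovitch19}, Section 5) of a positive-entropy subshift with no asymptotic pairs. Your proposed hierarchy with \emph{one} balanced $\mathbf{A}$-valued check per level-$k$ block cannot work as stated: a single group-valued linear functional has minimum distance at most $2$, so replacing $x_p$ by $x_p+g$ and $x_{p'}$ by $x_{p'}-g$ for two positions $p,p'$ lying in the same innermost block (or for $p\notin\bigcup_k S_k$, just changing $x_p$) preserves every check at every level and produces window boxes of positive density — in fact positive independence entropy, not merely an asymptotic pair. To kill all length-$\ell$ windows for every $\ell$ you need, at the appropriate scales, on the order of $\ell$ independent checks per block whose joint parity-check matrix has minimum distance $>\ell$ \emph{locally at every position}, with $\ell\to\infty$ along the hierarchy, while the total entropy cost $\sum_k(\text{checks at level }k)/L_k$ stays below $\log|\mathbf{A}|$ and while the marker/parsing scheme itself introduces no free or compensating coordinates. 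You correctly identify this as ``the crux,'' but it is precisely the content of the theorem you are trying to construct, and ``arrangeable because there are infinitely many scales'' does not discharge it. As written, the proposal proves the reduction (which the paper also proves) and leaves the existence of the required $X$ unestablished; citing the known construction, as the paper does, or carrying out the coding-theoretic design in full, is needed to close the argument.
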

This paper is organized as follows. In sections \ref{sec2} and \ref{sec3}, we introduce
the basic theory of shift spaces, and the notion of topological entropy, independence entropy, and supremum independence entropy. In section \ref{sec4}, we show that for any one-dimensional sofic shift space, the supremum independence entropy and topological entropy coincide. In section \ref{sec5}, we give an example of a one-dimensional shift space $X,$ with $ \sup h_{ind} (X) \neq h(X).$

\section{Definitions and Preliminary
Results}\label{sec2}
We start with some definitions and results from symbolic dynamics and graph theory.

An alphabet $\mathbf{A}$ consists of a finite set of discrete symbols such as $\{0,1,2,...,9\}$ or $\{a,b,c,...,z\}.$ The fundamental elements of study in symbolic dynamics are bi-infinite sequences of these letters, drawn from a given alphabet. Such a sequence is denoted by $x=\left( x_{i}\right)_{i\in \mathbb{Z}},$ or by $x=...x_{-2}x_{-1}.x_{0}x_{1}x_{2}...$ where each $x_{i}\in \mathbf{A}.$ The full\emph{\ }$\mathbf{A}$-shift\emph{\ }is the set of all bi-infinite sequences of symbols from $\mathbf{A}\emph{,}$ written as $\mathbf{A^\mathbb{Z}}=\{x=\left( x_{i}\right) _{i\in \mathbb{Z}}\vert \,x_{i}\in \mathbf{A},\forall i\in \mathbb{Z}\}.$

\emph{A block\ } (or\emph{ word}) over $\mathbf{A}$ is a finite sequence $u$ of symbols from $\mathbf{A}\emph{.}$\emph{\ }%
An $n$-block is a block of length $n\emph{,}$ the length denoted by\emph{\ }$\left\vert u\right\vert .$\emph{\ }An\emph{\ }\textit{empty\ block}\emph{\ }is the block with no symbol denoted by $\varepsilon $ with $\left\vert \varepsilon \right\vert =0. $ The set of all \emph{$n$-blocks} over $\mathbf{A}$ is denoted by $\mathbf{A}^{n}.$ A \emph{sub-block} or \emph{sub-word} of $u=a_{1}a_{2}...a_{n}$ is a block of the form $a_{i}a_{i+1}...a_{j}$ where $1\leq i\leq j\leq n.$ The empty block is a sub-block of every block. If $x$ is a point in $\mathbf{A}^{\mathbb{Z}}$ and $i\leq j,$ then $x_{\left[ i,j\right] }=x_{i}x_{i+1}...x_{j}$ will denote the corresponding sub-block. By extension, we will use the notation $x_{[i,\infty)}$ for the \emph{right-infinite word} $x_{i}x_{i+1}x_{i+2}\cdots ,$ similarly $x_{(-\infty,i]}=\cdots x_{i-2}x_{i-1}x_{i}.$ 

 \emph{The shift map }$\sigma $\emph{\ }on the full shift\emph{\ }$\mathbf{A}^{\mathbb{Z}}$\emph{\ }maps a point\emph{\ }$x$\emph{\ }to the point\emph{\ }$y=\sigma(x)$\emph{\ }such that\emph{\ }$y_{i}=x_{i+1},\forall i\in \mathbb{Z} \emph{.}$\emph{\ }Thus every letter is shifted one place to the left. The inverse shift map $\sigma ^{-1}$ shifts every letter to the right. We write $\sigma ^{n}$ to denote the $n$-fold composition. A set of infinite words over $\mathbf{A}$ is a shift space (or sub-shift) if it is closed with respect to the natural product topology of $\mathbf{A}^{\mathbb{Z}}$ and invariant under the shift operator.  A shift space $X$ is sometimes denoted as $\left( X,\sigma _{X}\right)$ to emphasize the role of the shift operator.

For $X \subseteq \mathbf{A}^{\mathbb{Z}}$, let $B_{n}(X)$ denote the set of all $n$-blocks that occur in any of the sequences inside $X.$
 \emph{The language\ of }$X$\emph{\ }is the set:\emph{\ }$B(X)=
\underset{n=0}{\overset{\infty }{\bigcup }}B_{n}(X)$ that\emph{\ }is, all possible "allowed" blocks in all points of\emph{\ }$X\emph{.}$\ A shift space $X$\emph{\ is irreducible }if for every ordered pair  $u,v\in B(X)$ there is a block $w$ such that $uwv\in B(X).$ 

We say that a map $\phi :X\rightarrow Y$ is shift commuting if $\sigma _{Y}\circ\phi =\phi \circ \sigma _{X}$.  Let $X$\ be a shift space and $\mathbf{A},\mathbf{A}^{\prime }$\ two alphabets. Fix $n,m\in \mathbb{Z}$ with $-m\leq n.$ A \emph{block map} or an $(m+n+1)-$\emph{block\ map} is a function $\Phi :B_{m+n+1}(X)\rightarrow \mathbf{A}^{\prime }.$ \emph{The sliding block code }induced by $\Phi $ is the map $\phi :X\rightarrow \left( \mathbf{A}^{\prime}\right) ^{\mathbb{Z}}$ satisfying $\left(\phi \left( x\right)\right) _{i}=\Phi \left(
x_{i-m}...x_{i+n}\right)$ for all $x\in X$ and $i\in \mathbb{Z}.$ If $\phi :X\rightarrow Y$ is a sliding block code, then $\phi \circ \sigma _{X}=\sigma _{Y}\circ \phi ,$ (\cite{LM1}, Proposition 1.5.7). An injective sliding block code $\phi :X\rightarrow Y$ is called an  \emph{embedding} of $X$ into $Y$ and we say that $X$ embeds into $Y$ if such a map exists. A surjective sliding block code $\phi :X\rightarrow Y$ is called a \emph{factor code} from $X$ onto $Y$ and we say that $Y$ is a factor of $X$ if there is a factor code from $X$ to $Y.$ Every bijective sliding block code $\phi :X\rightarrow Y$
has a sliding block inverse (\cite{LM1}, Theorem 1.5.14). In this case we call $\phi $ a \textit{conjugacy} from $X$ to $Y.$ Two shift spaces are \textit{conjugate}\ if there exists a conjugacy between them.

Let $X$ be a shift space over the alphabet $\mathbf{A},$ and $\mathbf{A}_{X}^{\left[ N\right] }=B_{N}(X)$ be the collection of all allowed $N$-blocks in $X.$ We can consider $\mathbf{A}_{X}^{\left[ N\right] }$ as an alphabet in its own right, and form the full
shift $(\mathbf{A}_{X}^{\left[ N\right] })^{\mathbb{Z}}.$  The $\emph{Nth}$ higher block code $\beta_{N}:X\rightarrow (\mathbf{A}_{X}^{\left[ N\right] })^{\mathbb{Z}}$ is defined by $\left( \beta _{N}(x)\right) _{i}=x_{\left[ i,i+N-1\right] }.$  The $N$th higher block shift $X^{\left[ N\right] }$ of $X$ is the image $X^{\left[ N\right] }=\beta _{N}(X)$ in the full shift over $\mathbf{A}_{X}^{\left[ N\right] }.$

Let $\mathcal{F}$ be a set of words over $\mathbf{A}$, and $X_{\mathcal{F}} $ the set of points $x\in \mathbf{A}^{\mathbb{Z}}$ such that no word of $\mathcal{F}$ occurs in $x.$ The set $\mathcal{F} $ is called a set of \emph{forbidden words} for $X.$ A shift of finite type is a shift space of the form $X_{\mathcal{F}}$ with $\left\vert \mathcal{F} \right\vert <\infty .$ A shift of finite type is $M$-step (or has memory $M$) if it can be described by a collection of forbidden blocks, all of which have length $M+1.$

Let $G=(V,E)$ be a directed graph with a finite set of vertices $V$ and a finite set of directed edges $E.$ For an edge $e\in E$ we denote by $\mathfrak{i}(e)$ and $\tau (e)$ the starting and terminating vertices of $e$ in $G.$ A bi-infinite path in $G$ is a sequence $(e_{i})_{i=-\infty }^{\infty}\subseteq E$ of edges such that for all integers $i$, $\tau
(e_{i})=\mathfrak{i}(e_{i+1})$. We also deal with finite paths, which are finite sequences $(e_{i})_{ i=1}^{\ell}\subseteq E$ of some length $\ell$ such that $\tau (e_{i})=\mathfrak{i} (e_{i+1})$ for $i=1,...,\ell-1$. Such a path is said to start at the vertex $\mathfrak{i} (e_{1})$ and end at the vertex $\tau (e_{\ell})$. A cycle is a finite path that starts and ends at the same vertex. A cycle $(e_{i})_{i=1}^{\ell}$ is simple if the vertices $\tau (e_{1}),...,\tau (e_{\ell})$ are distinct. For vertices $v,w\in V,$ let $A_{vw}$ denote the number of edges in $G$ with initial vertex $v$ and terminal vertex $w.$ The \textit{ adjacency matrix} of $G$ is defined as $A(G)=[A_{vw}].$ The \textit{edge shift} $X_{G}$ is the shift space over the alphabet $\mathbf{A}=E$ given by $X_{G}=X_{A(G)}=\{(\xi _{i})_{i\in \mathbb{Z}}\in E^{\mathbb{Z}}$ $|$ $\tau (\xi _{i})=\mathfrak{i}(\xi _{i+1})$ for all $i\in \mathbb{Z}\}.$ For a graph $G,$ the associated edge shift $X_{G}$\ is a 1-step shift of finite type, (\cite{LM1} Proposition 2.2.6). If $X$ is an $M$-step shift of finite type, then there is a graph $G$ such that $X^{[M+1]}=X_{G},$ (\cite{LM1} Theorem 2.3.2).

 A \textit{labeled directed graph} $\mathcal{G}$ is a pair $(G,\mathcal{L}),$ where $G$ is a directed graph with edge set $E,$ together with a labeling function $\mathcal{L} :E\rightarrow \mathbf{A}$ to a finite alphabet $\mathbf{A}.$ We call $G$ the underlying graph of $\mathcal{G}.$ For a labeled
graph $\mathcal{G}=(G,\mathcal{L} ),$ we define the label of a path $\pi=e_{1}\cdots e_{n}$ to be $\mathcal{L} \left( \pi \right) =\mathcal{L}\left( e_{1}\right) \cdots \mathcal{L} \left( e_{n}\right) ,$ and for a bi-infinite walk $\xi =\cdots e_{-1}.e_{0}e_{1}\cdots ,$ we have $\mathcal{L}_{\infty }\left( \xi \right) =\cdots \mathcal{L} \left( e_{-1}\right).\mathcal{L} \left( e_{0}\right) \mathcal{L} \left( e_{1}\right) \cdots .$ \\For a labeled graph $\mathcal{G}=(G,\mathcal{L})$, we define the \emph{labeled edge shift} $X_{\mathcal{G}}$ to be $X_{\mathcal{G}}
=\{{\mathcal{L} _{\infty }(\xi )\,\vert \, \xi \in X_{G}\}=\mathcal{L} _{\infty}}(X_{G}).$  A labeled graph $\mathcal{G}$ is said to be a \textit{presentation} of a shift space $X$ if $X=X_{\mathcal{G}}.$ If $w\in B\left( X_{\mathcal{G}}\right) ,$ we will say that a path $\pi $ in $G$ is a \textit{presentation} of $w$ if $\mathcal{L} (\pi )=w.$
If $x\in X_{\mathcal{G}},$ we will say that a bi-infinite walk $\xi $ in $X_{G}$ is a \textit{presentation} of $x$ if $\mathcal{L} (\xi )=x.$

 A shift space $X$ is said to be \textit{sofic} if there exists
some finite labeled graph $\mathcal{G}$ which is a presentation of $X$\ - that is, $X=X_{\mathcal{G}}.$ A labeled graph is called \textit{right resolving} if, for each vertex $v,$ the edges starting at $v$\ carry different labels. Every sofic shift has a right-resolving presentation, (\cite{LM1}, Theorem 3.3.2). \emph{A minimal right-resolving presentation }of a sofic
shift $X$\ is a labeled graph $\mathcal{G}$ such that $X=X_{\mathcal{G}}$\ and $\mathcal{G}$ has the fewest vertices among all right resolving presentations of $X.$

\section{Topological Entropy, Independence Entropy and Supremum independence entropy} \label{sec3}
Topological Entropy may be viewed as a measure of "information capacity" of a shift space - or of its ability to transmit messages.
\begin{definition}[Topological Entropy]
 Let $X$ be a shift space. The topological entropy of $X$ is defined by
\begin{equation*}
h(X)=\underset{n\rightarrow \infty }{\lim }\frac{1}{n}\ln \left\vert
B_{n}(X)\right\vert.
\end{equation*}
\end{definition}
The number of $n$-blocks appearing in sequences from $X$ gives us some
measure of complexity. The definition of entropy given above roughly
captures the growth rate of this complexity as $n$ increases. The limit exists (see for instance Lind and Marcus (\cite{LM1}, Prop. 4.1.8)), so the entropy is always well-defined. 
Topological entropy turns out to be invariant under conjugacy, and more generally:
\begin{proposition}[\cite{LM1}, Proposition 4.1.9]
If $Y$ is a factor of $X,$ then $h(Y)\leq h(X).$ In particular, if $X\simeq Y$ then $h(X)=h(Y).$
\end{proposition}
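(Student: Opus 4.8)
The plan is to reduce the statement to a single counting estimate comparing the number of admissible blocks of $Y$ with those of $X$, exploiting the fact (recalled above) that a factor code is a sliding block code.

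First I would fix a factor code $\phi : X \rightarrow Y$. By definition it is an injective... no, merely a surjective sliding block code, so there exist integers $m, n$ with $-m \le n$ and a block map $\Phi : B_{m+n+1}(X) \rightarrow \mathbf{A}^{\prime}$ with $(\phi(x))_i = \Phi(x_{i-m}\cdots x_{i+n})$ for all $x \in X$ and $i \in \mathbb{Z}$. The key observation is that $\Phi$ induces, for every $k \ge 1$, a well-defined map $\Phi_k : B_{k+m+n}(X) \rightarrow B_k(Y)$ which slides $\Phi$ across a block of length $k+m+n$; concretely $\Phi_k(a_1\cdots a_{k+m+n}) = \Phi(a_1\cdots a_{m+n+1})\,\Phi(a_2\cdots a_{m+n+2})\cdots \Phi(a_k\cdots a_{k+m+n})$. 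One checks the output is a genuine element of $B_k(Y)$ because every block occurring in $X$ extends to a point of $X$, whose $\phi$-image is a point of $Y$ in which the corresponding length-$k$ block occurs.

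Next I would show $\Phi_k$ is onto $B_k(Y)$: given $w \in B_k(Y)$, choose $y \in Y$ and $i \in \mathbb{Z}$ with $y_{[i,\,i+k-1]} = w$; since $\phi$ is surjective pick $x \in X$ with $\phi(x) = y$; then $\Phi_k\!\left(x_{[i-m,\,i+k-1+n]}\right) = w$. Hence $|B_k(Y)| \le |B_{k+m+n}(X)|$ for every $k \ge 1$. Taking logarithms, dividing by $k$, and letting $k \to \infty$ gives
\[
h(Y) = \lim_{k\to\infty}\frac{1}{k}\ln|B_k(Y)| \le \lim_{k\to\infty}\frac{k+m+n}{k}\cdot\frac{1}{k+m+n}\ln|B_{k+m+n}(X)| = h(X),
\]
where we use that the limit defining $h(X)$ exists and that $(k+m+n)/k \to 1$. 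For the final assertion, if $X \simeq Y$ then $Y$ is a factor of $X$ and $X$ is a factor of $Y$ (via the conjugacy and its inverse), so $h(Y) \le h(X)$ and $h(X) \le h(Y)$, whence $h(X) = h(Y)$.

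I do not expect a genuine obstacle here; the only delicate points are the index bookkeeping in the definition of $\Phi_k$ and the verification that its image consists of blocks that are actually admissible in $Y$, which rests on the fact that the subblocks of points of a shift space are precisely the elements of its language together with the surjectivity of $\phi$.
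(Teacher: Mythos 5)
Your proof is correct and is essentially the standard argument from the cited reference (the paper itself states this as \cite{LM1}, Proposition 4.1.9 without reproving it): the sliding-block window gives $|B_k(Y)|\leq |B_{k+m+n}(X)|$ via surjectivity, and the entropy inequality follows by taking logarithms and limits. The index bookkeeping and the conjugacy argument in both directions are all handled correctly.
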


In order to say something about the entropy of sofic shifts, we need the following proposition, which connects a presentation of a sofic shift as a labeled graph with its entropy.
\begin{proposition}[\cite{LM1}, Proposition 4.1.13]\label{thm113}
Let $\mathcal{G}=(G,\mathcal{L} )$ be a right resolving labeled graph. Then $h(X_{\mathcal{G}})=h(X_{G}).$
\end{proposition}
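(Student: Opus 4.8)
The plan is to establish the two inequalities $h(X_{\mathcal{G}}) \le h(X_G)$ and $h(X_G) \le h(X_{\mathcal{G}})$ separately; together they give the claimed equality.

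For the inequality $h(X_{\mathcal{G}}) \le h(X_G)$, I would observe that $X_{\mathcal{G}} = \mathcal{L}_{\infty}(X_G)$ by definition, and that $\mathcal{L}_{\infty}$ is the sliding block code induced by the $1$-block map $\mathcal{L}$, hence a factor code from $X_G$ onto $X_{\mathcal{G}}$. Since factors do not increase entropy (the preceding proposition), $h(X_{\mathcal{G}}) \le h(X_G)$. This direction needs nothing beyond the definitions.

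The content of the statement is the reverse inequality, where the right-resolving hypothesis is used. The key combinatorial observation is that a path in $G$ is determined by its initial vertex together with its label: if $\pi = e_{1}\cdots e_{n}$ and $\pi' = e_{1}'\cdots e_{n}'$ satisfy $\mathfrak{i}(e_{1}) = \mathfrak{i}(e_{1}')$ and $\mathcal{L}(\pi) = \mathcal{L}(\pi')$, then $\pi = \pi'$. I would prove this by induction on the index $k$: at the vertex reached after $k-1$ steps (the same for both paths by the inductive hypothesis), the edges $e_{k}$ and $e_{k}'$ both start there and carry the label $(\mathcal{L}(\pi))_{k}$, so right-resolvingness forces $e_{k} = e_{k}'$. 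Also, since every point of $X_{\mathcal{G}}$ is the label of a bi-infinite walk in $X_G$, one has $B_{n}(X_{\mathcal{G}}) = \mathcal{L}\big(B_{n}(X_G)\big)$. Combining these, the map $\pi \mapsto \big(\mathfrak{i}(e_{1}), \mathcal{L}(\pi)\big)$ embeds $B_{n}(X_G)$ into $V \times B_{n}(X_{\mathcal{G}})$, so $|B_{n}(X_G)| \le |V|\cdot|B_{n}(X_{\mathcal{G}})|$ for all $n$.

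Finally, applying $\frac{1}{n}\ln(\cdot)$ and letting $n\to\infty$, the term $\frac{1}{n}\ln|V|$ tends to $0$, yielding $h(X_G) \le h(X_{\mathcal{G}})$; together with the first inequality this proves $h(X_{\mathcal{G}}) = h(X_G)$. I do not expect a genuine obstacle here: the only points requiring care are the clean formulation and induction for the uniqueness-of-path claim, and the routine verification that $B_{n}(X_{\mathcal{G}})$ is exactly the set of labels of $n$-blocks of $X_G$.
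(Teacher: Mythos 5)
Your proof is correct and is essentially the standard argument for this result (the paper simply cites it from Lind--Marcus without reproducing a proof): the factor-code inequality gives $h(X_{\mathcal{G}})\le h(X_G)$, and right-resolvingness gives the injection $\pi\mapsto(\mathfrak{i}(e_1),\mathcal{L}(\pi))$, hence $|B_n(X_G)|\le |V|\cdot|B_n(X_{\mathcal{G}})|$ and the reverse inequality in the limit. No gaps.
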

 A non-negative matrix $A$ is\emph{\ irreducible }if for each ordered pair of indices $I,J,$ there exists some $n\geq 0$ such that $A_{IJ}^{n}>0.$ A graph $G$ is\emph{\ irreducible }if for every pair of
vertices $v,w\in V,$ there exists a path beginning at $v$ and terminating at $w.$
The largest eigenvalue of $A$ is called the \emph{Perron} \emph{eigenvalue}, and denoted by $\lambda _{A}$.

\begin{theorem}[\cite{LM1}, Theorem 4.3.1]\label{thm 431}
 If $G$  is an irreducible graph and $A$ is the adjacency matrix of $G$, then $h(X_{G})=\ln \lambda _{A}.$
\end{theorem}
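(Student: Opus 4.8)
The plan is to express $|B_n(X_G)|$ exactly in terms of powers of the adjacency matrix $A=A(G)$ and then pin down its exponential growth rate using the Perron--Frobenius theorem. We may assume $G$ has at least one edge (otherwise $X_G=\emptyset$ and the statement is trivial). First I would note that irreducibility of $G$ forces $G$ to be \emph{essential}: every vertex has at least one incoming and one outgoing edge. Indeed, fix any edge $e$ with $\mathfrak{i}(e)=a$ and $\tau(e)=b$; for an arbitrary vertex $v$, irreducibility supplies a path $v\to a$ and a path $b\to v$, and concatenating these with $e$ produces a cycle of positive length through $v$, whose first and last edges exhibit an outgoing and an incoming edge at $v$. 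Since $V$ is finite, every finite path in $G$ can then be prolonged, one edge at a time, to a bi-infinite path; hence the $n$-blocks of $X_G$ are precisely the edge-paths of length $n$ in $G$. Because the number of such paths from a vertex $I$ to a vertex $J$ is $(A^n)_{IJ}$, we obtain
$$|B_n(X_G)| \;=\; \sum_{I,J\in V}(A^n)_{IJ} \;=\; \mathbf{1}^{\mathsf T}A^{n}\mathbf{1},$$
where $\mathbf{1}$ denotes the all-ones column vector.

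Next I would apply the Perron--Frobenius theorem to the non-negative irreducible matrix $A$: its largest eigenvalue $\lambda_A$ is positive, equals the spectral radius of $A$, and possesses a strictly positive right eigenvector $v$, so that $Av=\lambda_A v$ with all $v_I>0$. Set $v_{\min}=\min_I v_I>0$ and $v_{\max}=\max_I v_I$. Since all entries of $A^{n}$ are non-negative and $v_{\min}\mathbf{1}\le v\le v_{\max}\mathbf{1}$ entrywise, we get $v_{\min}A^{n}\mathbf{1}\le A^{n}v\le v_{\max}A^{n}\mathbf{1}$; multiplying on the left by $\mathbf{1}^{\mathsf T}$ and using $A^{n}v=\lambda_A^{\,n}v$ gives
$$v_{\min}\,\mathbf{1}^{\mathsf T}A^{n}\mathbf{1} \;\le\; \lambda_A^{\,n}\,\mathbf{1}^{\mathsf T}v \;\le\; v_{\max}\,\mathbf{1}^{\mathsf T}A^{n}\mathbf{1}.$$
Combining with the identity above, there are constants $0<c_1\le c_2<\infty$, independent of $n$, with $c_1\lambda_A^{\,n}\le |B_n(X_G)|\le c_2\lambda_A^{\,n}$.

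Finally, I would take $\tfrac1n\ln(\cdot)$ throughout this two-sided estimate and let $n\to\infty$: the terms $\tfrac1n\ln c_i$ tend to $0$, so both bounds converge to $\ln\lambda_A$, and by the definition of topological entropy (the defining limit existing, as recalled above) we conclude $h(X_G)=\ln\lambda_A$. The only step that is not elementary bookkeeping is the appeal to Perron--Frobenius, and this is also precisely where irreducibility of $A$ is needed: it is what guarantees that the largest eigenvalue is realised by a strictly positive eigenvector, which is exactly what makes the sandwich argument run. (One could avoid eigenvectors altogether by sandwiching $\max_{I,J}(A^n)_{IJ}\le \mathbf{1}^{\mathsf T}A^{n}\mathbf{1}\le |V|^{2}\max_{I,J}(A^n)_{IJ}$ and invoking Gelfand's formula $\rho(A)=\lim_n\|A^n\|^{1/n}$ with the equivalence of matrix norms, but the substance is the same.)
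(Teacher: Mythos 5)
This theorem is quoted from Lind--Marcus without proof in the paper, and your argument is precisely the standard one from that source: reduce $|B_n(X_G)|$ to $\mathbf{1}^{\mathsf T}A^n\mathbf{1}$ and sandwich it between constant multiples of $\lambda_A^n$ using a strictly positive Perron eigenvector (this two-sided bound is exactly \cite{LM1}, Proposition 4.2.1, which the paper itself invokes later). Your proof is correct, including the preliminary check that irreducibility makes $G$ essential so that $n$-blocks coincide with length-$n$ paths.
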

\begin{theorem}[\cite{LM1}, Theorem 4.3.3]
 Let $\emph{X}$ be an irreducible sofic shift and $\mathcal{G}=(G,\mathcal{L} )$ be an irreducible right resolving presentation of ${X}$. Then $h(X)=\ln \lambda _{A},$ where $A$ is the adjacency matrix of $G$.
\end{theorem}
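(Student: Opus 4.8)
The plan is to obtain the statement immediately from the two results already recorded above: Proposition~\ref{thm113}, which says that a right-resolving labeled graph and its underlying edge shift have equal entropy, and Theorem~\ref{thm 431}, which computes the entropy of an irreducible edge shift as $\ln\lambda_A$.

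First I would note that, since $\mathcal{G}=(G,\mathcal{L})$ is by hypothesis a presentation of $X$, we have $X=X_{\mathcal{G}}$ and hence $h(X)=h(X_{\mathcal{G}})$. Since $\mathcal{G}$ is right resolving, Proposition~\ref{thm113} then gives $h(X_{\mathcal{G}})=h(X_G)$. Finally, the hypothesis that $\mathcal{G}$ is an \emph{irreducible} right-resolving presentation means in particular that the underlying graph $G$ is irreducible, so Theorem~\ref{thm 431} applies to $G$ with adjacency matrix $A$ and yields $h(X_G)=\ln\lambda_A$. Chaining the three equalities gives $h(X)=\ln\lambda_A$.

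I do not expect any genuine obstacle here: the whole content is carried by the two cited results, and the only fine point is bookkeeping — the irreducibility hypothesis is phrased for the presentation $\mathcal{G}$, whereas Theorem~\ref{thm 431} wants it for the graph $G$, and these agree because irreducibility of a labeled graph is defined through its underlying graph. If instead one wanted a proof from scratch, the work would move into reproving the two inputs. For Proposition~\ref{thm113} one would check that the label map $\mathcal{L}_\infty\colon X_G\to X_{\mathcal{G}}$ is a factor code, giving $h(X_{\mathcal{G}})\le h(X_G)$ for free, and that right-resolvingness forces this map to be at most $|V|$-to-one on points, which rules out any drop in entropy; the interesting direction is this ``no loss'' inequality. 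For Theorem~\ref{thm 431} one would invoke Perron--Frobenius: the number of paths of length $n$ in $G$ equals $\sum_{v,w}(A^n)_{vw}$, and for an irreducible nonnegative integer matrix $A$ this quantity is comparable to $\lambda_A^n$ up to polynomial factors, so that $\frac1n\ln|B_n(X_G)|\to\ln\lambda_A$.
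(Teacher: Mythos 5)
Your derivation is correct and is exactly the standard argument: the paper itself states this result as a citation to Lind--Marcus, and the proof there is precisely the chain $h(X)=h(X_{\mathcal{G}})=h(X_G)=\ln\lambda_A$ obtained from Proposition~\ref{thm113} and Theorem~\ref{thm 431}, with the only bookkeeping point being that irreducibility of the presentation is irreducibility of the underlying graph $G$. Nothing further is needed.
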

 Since entropy measures exponential growth, it seems natural that sub-shifts of $X$ with an entropy less than $X$ can be disregarded in calculating the entropy of $X$ as they only contribute sub-exponentially. This is the basis of the next proposition.
\begin{theorem}[\cite{LM1}, Theorem 4.4.4]\label{444}
 Let $G$ be a finite graph and let $T$ be the set of irreducible sub-graphs of $G.$ Then,
\begin{equation*}
h(X_{G})=\underset{H\in T}{\max }\text{ }h(X_{H}).
\end{equation*}
\end{theorem}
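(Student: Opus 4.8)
The plan is to prove the two inequalities separately; one is immediate and the other rests on decomposing $G$ into its strongly connected components. The inequality $\max_{H\in T}h(X_H)\le h(X_G)$ is trivial: if $H\in T$ then $X_H\subseteq X_G$, hence $B_n(X_H)\subseteq B_n(X_G)$ for every $n$ and $h(X_H)\le h(X_G)$; taking the maximum over $H$ gives the claim. (If $G$ has no cycle, both sides equal $-\infty$, since $X_G=\emptyset$ and $T=\emptyset$; so assume $G$ has a cycle.) For the reverse inequality I would first reduce to the strongly connected components $C_1,\dots,C_N$ of $G$: each $C_i$ induces an irreducible subgraph, and every $H\in T$ has all of its vertices inside a single $C_i$, so $X_H\subseteq X_{C_i}$ and $h(X_H)\le h(X_{C_i})$. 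Hence $\max_{H\in T}h(X_H)=\max_{1\le i\le N} h(X_{C_i})$, and it remains to prove $h(X_G)\le\max_i h(X_{C_i})$.

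The key structural point is that the condensation of $G$ --- the graph on vertex set $\{C_1,\dots,C_N\}$ with an edge for each edge of $G$ joining two different components --- is acyclic, since a path that leaves a component can never re-enter it. Therefore any finite path $\pi=e_1\cdots e_n$ in $G$ passes through at most $N$ distinct components in succession and so crosses between components at most $N-1$ times; that is, $\pi=p_0 f_1 p_1\cdots f_k p_k$ with $k\le N-1$, each $f_j$ a single inter-component edge, and each $p_j$ a (possibly empty) path contained in a single component $C_{i_j}$. Counting accordingly, $|B_n(X_G)|$ is bounded by: a constant $C_0=C_0(G)$ counting the choices of the component sequence and the crossing edges, times at most $(n+1)^N$ choices for the lengths $n_0,\dots,n_k$ (which satisfy $\sum_j n_j\le n$), times $\prod_{j=0}^k|B_{n_j}(X_{C_{i_j}})|$ --- here I use that an $m$-block of an edge shift is exactly a path of length $m$ in the graph, and that prescribing the endpoints of the $p_j$ only decreases the count.

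It remains to bound the per-component counts uniformly. By Perron--Frobenius, the adjacency matrix of a nontrivial component $C_i$ has a strictly positive eigenvector for its Perron eigenvalue $\lambda_i$, which yields a constant $c_i$ with $|B_m(X_{C_i})|\le c_i\lambda_i^{\,m}$ for all $m$, while $h(X_{C_i})=\ln\lambda_i$ by Theorem~\ref{thm 431} (trivial components contribute only a factor $1$). With $\lambda=\max_i\lambda_i$ and $c=\max_i c_i$ this gives $\prod_{j=0}^k|B_{n_j}(X_{C_{i_j}})|\le c^{N}\lambda^{\sum_j n_j}\le c^{N}\lambda^{n}$, hence $|B_n(X_G)|\le C_0\,(n+1)^N c^{N}\lambda^{n}$. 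Applying $\frac1n\ln(\cdot)$ and letting $n\to\infty$ kills the polynomial factor and yields $h(X_G)\le\ln\lambda=\max_i h(X_{C_i})$, which together with the easy direction completes the proof.

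The step I expect to be the main obstacle is the combinatorial bookkeeping of the second paragraph: one must verify carefully that every finite path admits the claimed decomposition with at most $N-1$ inter-component crossings, that the resulting overcount contributes only a polynomial factor in $n$ (which disappears after dividing by $n$), and that the exponential estimate on each component is genuinely uniform --- the Perron eigenvector is precisely what supplies that uniformity. Degenerate cases, such as the empty shift or a component that is a single vertex with no loop, are dealt with by the usual conventions and do not affect the maximum.
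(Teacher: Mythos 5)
The paper does not prove this statement; it is quoted verbatim from Lind--Marcus (\cite{LM1}, Theorem 4.4.4), and your argument is essentially the standard proof given there: the easy inequality from $X_H\subseteq X_G$, plus the decomposition of any path into at most $N-1$ inter-component crossings and subpaths inside strongly connected components, with Perron--Frobenius supplying the uniform bound $|B_m(X_{C_i})|\le c_i\lambda_i^m$ and the polynomial overcount vanishing after taking $\frac1n\ln$. The argument is correct; the only unflagged point is that the step $\lambda^{\sum_j n_j}\le\lambda^n$ uses $\lambda\ge 1$, which holds because you have already reduced to the case where $G$ contains a cycle, so some component has a nonzero integer adjacency matrix and Perron eigenvalue at least $1$.
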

Louidor, Marcus and Pavlov introduce (\cite{LMP13}, and see also \cite{PCM04}) the concept of independence entropy for symbolic dynamical systems.
\begin{definition}[Fillings set]
 For a finite alphabet $\mathbf{A}.$ Let $\hat{\mathbf{A}}$
denote the set of all non-empty subsets of $\mathbf{A}.$ Let $X$ be a $\mathbb{Z}$-shift space over $\mathbf{A}.$ Let $\mathbf{A}^*$ denote the set of all finite words over $\mathbf{A}.$ For $\hat{x}\in {\hat{({\mathbf{A}})}}^*$ define the set of fillings of $\hat{x},$ denoted $\Phi(\hat{x}),$ by
\begin{equation*}
\Phi(\hat{x})=\{x\in{\mathbf{A}^*|\:\vert x \vert=\vert\hat{x}\vert,     \: \text{for all } i, x_{i}\in{\hat{x}_{i}}\}}.
\end{equation*}
\end{definition}
For convenience, we extend the domain of $\Phi$ to include bi-infinite words.
\begin{definition}
If $\hat{x} \in {\hat{\mathbf{A}}^\mathbb{Z}}$ is a bi-infinite word, $\Phi(\hat{x})$ is given by
\begin{equation*}
\Phi(\hat{x})=\{x\in{\hat{\mathbf{A}}^\mathbb{Z}}\:|\: \text{for all }i, x_{i}\in{\hat{x}_{i}}\}.
\end{equation*}
\end{definition}
 \begin{definition}[The multi-choice shift space]
 Let $X$ be a shift space. We define the multi-choice shift space corresponding to $X,$ denoted $\hat{X},$ by
 \begin{equation*}
 \hat{X}=\{\hat{x}\in{\hat{\mathbf{A}}^\mathbb{Z}}\:|\: \Phi(\hat{x})\subseteq X\} .
 \end{equation*}
\end{definition}
\begin{definition}[Independence entropy]\label{def:1}
Let $X$ be a shift space. The independence entropy denoted $h_{ind}(X)$, is defined by
\begin{equation*}
h_{ind}(X)=\lim_{m\rightarrow\infty}\frac{\max  \{\ln \vert\Phi(\hat{w})\vert\:|\:\hat{w}\in B_{m}(\hat{X})\}}{m}.
\end{equation*}
\end{definition}
The limit exists, (see \cite{LMP13} pages 302-303).
 \begin{example}
 Let $X$ be the golden mean shift. $X$ consists of all
bi-infinite \{0,1\} sequences which do not contain consecutive $1'$s. For any block $\hat{w} \in \hat{X}$, we have $|\Phi(\hat{w})|=2^{i},$ where $i$ is the number of occurrences of the symbol \{0,1\} in $\hat{w}.$ Therefore, the maximum value of $|\Phi(\hat{w})|$ for $\hat{w}$ of a fixed length $m$ will be achieved by maximizing the number of occurrences of \{0,1\} in $\hat{w}.$ For any $m,$ it is clear that there exists $\hat{w} \in  B_{m}(\hat{X})$ with $\lceil\frac{m}{2}\rceil$ occurrences of \{0,1\}, namely $\hat{w}=\{0,1\}\{0\}\{0,1\}\cdots$. It is also clear that there is no $\hat{w} \in B_{m}(\hat{X})$ with more occurrences of \{0,1\}, since such $\hat{w}$ would contain two consecutive \{0,1\}'s. Therefore, $h_{ind}(X)=\underset{m\to\infty}\lim\frac{\ln{2^{\lceil\frac{m}{2}\rceil}}}{m}=\frac{\ln2}{2}$.
 \end{example}
 For every $m\in \mathbb{N},$ and a block $\hat{z}(m)\in B_{m}(\hat X)$, we have $\Phi(\hat{z})\subseteq B_{m}(X).$ This yields the following theorem.
 \begin{theorem}[\cite{LMP13}, Theorem 3] 
 For any shift space $X$ over an alphabet $\mathbf{A}$, $h_{ind}(X)\leq h(X).$
 \end{theorem}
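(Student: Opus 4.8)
The plan is to reduce the theorem to the elementary inclusion recorded in the remark immediately preceding the statement, namely that $\Phi(\hat{w})\subseteq B_m(X)$ for every $\hat{w}\in B_m(\hat{X})$; once this is in hand the inequality follows by counting and passing to the limit. So the only point requiring any care is this inclusion.

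First I would unwind the definitions. Suppose $\hat{w}\in B_m(\hat{X})$. By definition of the block set there is a bi-infinite word $\hat{x}\in\hat{X}$ in which $\hat{w}$ occurs, say $\hat{x}_{[i,i+m-1]}=\hat{w}$, and by definition of the multi-choice shift $\Phi(\hat{x})\subseteq X$. Fix any filling $w\in\Phi(\hat{w})$. Because every coordinate $\hat{x}_j$ is a \emph{non-empty} subset of $\mathbf{A}$, I can choose a symbol of $\hat{x}_j$ for each index $j\notin[i,i+m-1]$ and let these choices coincide with $w$ on $[i,i+m-1]$ (which is legitimate since $w_k\in\hat{w}_k=\hat{x}_{i+k-1}$); this produces a point $x\in\Phi(\hat{x})$. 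Since $\Phi(\hat{x})\subseteq X$ we get $x\in X$, and $w=x_{[i,i+m-1]}$ occurs in $x$, so $w\in B_m(X)$. As $w\in\Phi(\hat{w})$ was arbitrary, $\Phi(\hat{w})\subseteq B_m(X)$, hence $|\Phi(\hat{w})|\le|B_m(X)|$.

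Taking the maximum over $\hat{w}\in B_m(\hat{X})$ then gives $\max\{\ln|\Phi(\hat{w})|: \hat{w}\in B_m(\hat{X})\}\le\ln|B_m(X)|$; dividing by $m$ and letting $m\to\infty$ yields $h_{ind}(X)\le h(X)$, both limits existing — the left-hand one by the remark following Definition \ref{def:1}, the right-hand one being by definition the topological entropy.

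I do not expect a genuine obstacle here. The two points deserving a line of justification, neither of which disturbs the structure of the argument, are: that $B_m(\hat{X})\neq\emptyset$ whenever $X\neq\emptyset$ (any $x\in X$ gives the singleton multi-choice point $\hat{x}$ with $\hat{x}_i=\{x_i\}$, for which $\Phi(\hat{x})=\{x\}\subseteq X$, so this point lies in $\hat{X}$ and $B_m(\hat{X})$ is populated), and that a filling of $\hat{w}$ always extends to a filling of the ambient $\hat{x}$, which is precisely where non-emptiness of the sets $\hat{x}_j\in\hat{\mathbf{A}}$ is used.
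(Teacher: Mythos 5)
Your proof is correct and follows exactly the route the paper intends: it fleshes out the one-line observation preceding the theorem that $\Phi(\hat{w})\subseteq B_m(X)$ for every $\hat{w}\in B_m(\hat{X})$ (using non-emptiness of the coordinate sets to extend a filling of the block to a filling of the ambient point), and then concludes by counting and passing to the limit.
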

\begin{theorem}
 Let $X$ be a sofic shift space
over $\mathbf{A}.$ Let $X^{\left[ N\right] }$ be the $N$th higher block shift space of $X$. Then for any $N \geq 2,$ $h_{ind}(X^{\left[ N\right] })=0.$
\begin{proof}
 By contradiction, assume that $h_{ind}(X^{\left[ N\right]
})>0. $ Then there exist two elements $x,y\in X^{\left[ N\right] }$
differing in just one coordinate $x_{i}\neq y_{j}\Leftrightarrow i=i_{0},$ (see Lemma \ref{lem 5} below).
But this could not happen because of the overlap condition, (see page 12 in \cite{LM1}).
\end{proof}
\end{theorem}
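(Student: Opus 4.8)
The plan is to argue by contradiction, reducing the theorem to the rigidity of higher block presentations through one auxiliary fact about independence entropy. That fact, which I would isolate as a separate lemma, is: \emph{if a shift space $Z$ has $h_{ind}(Z)>0$, then $Z$ contains two distinct points that differ in exactly one coordinate}. To see it, note that for a block $\hat w\in B_m(\widehat Z)$ one has $|\Phi(\hat w)|=\prod_i|\hat w_i|$, so $h_{ind}(Z)>0$ forces, for some $m$, a block $\hat w\in B_m(\widehat Z)$ with $|\Phi(\hat w)|>1$, hence with at least one coordinate $i_0$ satisfying $|\hat w_{i_0}|\ge 2$. As $\hat w\in B_m(\widehat Z)$, it occurs in some bi-infinite $\hat z\in\widehat Z$, and $\Phi(\hat z)\subseteq Z$ by definition of the multi-choice shift; so one may take a filling $z\in\Phi(\hat z)$ and change only its $i_0$-th symbol to another element of $\hat z_{i_0}$, producing two points of $\Phi(\hat z)\subseteq Z$ that differ precisely at $i_0$.

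Now apply this with $Z=X^{[N]}$. Assuming $h_{ind}(X^{[N]})>0$, we obtain $u\ne v$ in $X^{[N]}$ with $u_i=v_i$ for all $i\ne i_0$ and $u_{i_0}\ne v_{i_0}$. Because $X^{[N]}=\beta_N(X)$, write $u=\beta_N(x)$ and $v=\beta_N(y)$ with $x,y\in X$, so that $u_i=x_{[i,i+N-1]}$ and $v_i=y_{[i,i+N-1]}$ for every $i$. The overlap condition for higher block shifts (see p.~12 of \cite{LM1}) says that consecutive symbols of any point of $X^{[N]}$ share their overlapping $N-1$ letters; in particular the first $N-1$ letters of $u_{i_0}$ coincide with the last $N-1$ letters of $u_{i_0-1}$, the last $N-1$ letters of $u_{i_0}$ coincide with the first $N-1$ letters of $u_{i_0+1}$, and the same for $v$. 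For $N\ge 2$ the position sets $\{1,\dots,N-1\}$ and $\{2,\dots,N\}$ together exhaust $\{1,\dots,N\}$, so the $N$-block $u_{i_0}$ is completely determined by the pair $(u_{i_0-1},u_{i_0+1})$, and likewise $v_{i_0}$ by $(v_{i_0-1},v_{i_0+1})$. Since $u_{i_0-1}=v_{i_0-1}$ and $u_{i_0+1}=v_{i_0+1}$, this forces $u_{i_0}=v_{i_0}$, contradicting $u_{i_0}\ne v_{i_0}$. Hence $h_{ind}(X^{[N]})\le 0$, and since independence entropy is always nonnegative, $h_{ind}(X^{[N]})=0$.

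The overlap step is routine once set up correctly; I expect the substantive work to lie in the auxiliary lemma, and within it the careful passage from ``the limit defining $h_{ind}(Z)$ is positive'' to ``some block of $\widehat Z$ has a genuinely multi-valued coordinate'' and thence to the two distinguished points of $Z$. One should verify that $B_m(\widehat Z)$ is indeed the $m$-block language of a shift space, so that finite blocks of $\widehat Z$ extend to bi-infinite points $\hat z$ with $\Phi(\hat z)\subseteq Z$; granting this, the single-symbol modification is immediate. It is worth remarking that soficity of $X$ is not used anywhere: the conclusion $h_{ind}(X^{[N]})=0$ holds for every shift space $X$ and every $N\ge 2$, with the sofic case being the one relevant to the remainder of the paper.
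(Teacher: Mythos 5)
Your proof is correct and takes essentially the same approach as the paper: the auxiliary fact you isolate is exactly the paper's Lemma \ref{lem 5} (positive independence entropy produces two points of the shift differing in a single coordinate), and the contradiction via the overlap condition of the higher block code is precisely what the paper invokes. You have simply filled in the details the paper leaves as a sketch, including the accurate remark that soficity of $X$ is never used.
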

\begin{corollary}
Let ${X}$ be a sofic shift space. Then
\begin{center}
$\inf h_{ind}\left( X\right) =\inf \left\{ h_{ind}\left( Y\right)
|Y\simeq X\text{ }\right\} =0.$
\end{center}
\end{corollary}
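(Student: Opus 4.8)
The plan is to combine the preceding theorem with two elementary observations: that independence entropy is never negative, and that the $N$th higher block shift lies in the conjugacy class of $X$. First I would record that for any non-empty shift space $Y$ over an alphabet $\mathbf{A}'$ one has $h_{ind}(Y)\ge 0$. Indeed $B_m(\hat Y)\neq\emptyset$ for every $m$ (take $y\in Y$ and the word $\hat y$ with $\hat y_i=\{y_i\}$, so that $\Phi(\hat y)=\{y\}\subseteq Y$ and hence $\hat y\in\hat Y$), and for every $\hat w\in B_m(\hat Y)$ the set $\Phi(\hat w)$ is non-empty because each $\hat w_i$ is a non-empty subset of $\mathbf{A}'$; choosing one symbol from each $\hat w_i$ produces a filling. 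Thus $\ln|\Phi(\hat w)|\ge 0$ for every such $\hat w$, so each term in the defining limit of $h_{ind}(Y)$ is non-negative, and therefore $h_{ind}(Y)\ge 0$. In particular $\inf\{h_{ind}(Y)\mid Y\simeq X\}\ge 0$.

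Next I would observe that for every $N\ge 1$ the $N$th higher block code $\beta_N\colon X\to X^{[N]}$ is a conjugacy: it is a sliding block code by construction, it is onto $X^{[N]}$ by the very definition of the higher block shift, and it is injective since the first symbol of $(\beta_N(x))_i$ equals $x_i$, so $x$ is recovered from $\beta_N(x)$; a bijective sliding block code is a conjugacy (\cite{LM1}, Theorem 1.5.14). Hence $X^{[N]}\simeq X$, and in particular $X^{[2]}$ lies in the conjugacy class of $X$. Since $X$ is sofic and soficity is a conjugacy invariant (equivalently, $X^{[2]}$ is a sliding block image of $X$), the shift $X^{[2]}$ is sofic as well.

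The argument then concludes by applying the preceding theorem to $X^{[2]}$: taking $N=2\ge 2$ gives $h_{ind}(X^{[2]})=0$. Combining this with the non-negativity established above,
\begin{equation*}
0 \;\le\; \inf\{h_{ind}(Y)\mid Y\simeq X\} \;\le\; h_{ind}(X^{[2]}) \;=\; 0,
\end{equation*}
so $\inf h_{ind}(X)=0$; moreover the infimum is attained, realized by $X^{[2]}$ (indeed by $X^{[N]}$ for any $N\ge 2$).

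I do not expect a genuine obstacle in this corollary: all of the real work has already been done in the preceding theorem, and what remains is only the non-negativity of $h_{ind}$ and the standard fact that $\beta_N$ is a conjugacy. The one place where care is needed is upstream, in the preceding theorem itself, whose proof rests on the overlap structure of higher block presentations (\cite{LM1}, p.~12) — that two consecutive windows of an element of $X^{[N]}$ must agree on $N-1$ overlapping coordinates, which for $N\ge 2$ forbids any $\hat w\in B(\hat{Z})$, $Z=X^{[N]}$, from carrying a coordinate whose filling set has size at least two. Since we are entitled to quote that theorem as stated, the corollary follows immediately.
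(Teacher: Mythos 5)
Your proof is correct and follows exactly the route the paper intends: the corollary is stated without proof as an immediate consequence of the preceding theorem, using precisely the two facts you supply — that $h_{ind}\geq 0$ for any non-empty shift space and that $X^{[N]}\simeq X$ is sofic, so $h_{ind}(X^{[2]})=0$ realizes the infimum. Nothing further is needed.
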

\begin{lemma}\label{lem 2}
If $X,Y$ are two shift spaces such that $X\subseteq
Y,$ then \begin{equation*}
h_{ind}(X)\leq h_{ind}(Y).
\end{equation*}
\begin{proof}
 $B_{n}\left( \widehat{X}\right) \subseteq B_{n}\left( 
\widehat{Y}\right) ,$ so the maximum of Definition \ref{def:1} is taken over a larger set for $\widehat{Y}. $
\end{proof}
\end{lemma}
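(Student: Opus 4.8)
The plan is to reduce the inequality to a one-line monotonicity statement about the multi-choice shift spaces $\widehat{X}$ and $\widehat{Y}$, and then carry that monotonicity through the definition of independence entropy.

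First I would check that $X\subseteq Y$ forces $\widehat{X}\subseteq\widehat{Y}$. This is immediate from the definition of the multi-choice shift space: if $\hat{x}\in\widehat{X}$ then $\Phi(\hat{x})\subseteq X$, and since $X\subseteq Y$ we also have $\Phi(\hat{x})\subseteq Y$, i.e.\ $\hat{x}\in\widehat{Y}$. Consequently every block occurring in some point of $\widehat{X}$ occurs in some point of $\widehat{Y}$, so $B_{m}(\widehat{X})\subseteq B_{m}(\widehat{Y})$ for every $m\in\mathbb{N}$. (If $X\neq\emptyset$, pick $x\in X$ and note that the word $\hat{x}$ with $\hat{x}_{i}=\{x_{i}\}$ lies in $\widehat{X}$, so these block sets are nonempty finite sets and the maxima below make sense; the case $X=\emptyset$ is trivial.)

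Finally I would plug this into Definition \ref{def:1}. For each fixed $m$, the numerator $\max\{\ln|\Phi(\hat{w})|:\hat{w}\in B_{m}(\widehat{X})\}$ is the maximum of a fixed function over $B_{m}(\widehat{X})$, whereas for $Y$ the same function is maximized over the larger set $B_{m}(\widehat{Y})$; hence the $X$-numerator is at most the $Y$-numerator. Dividing by $m$ and letting $m\to\infty$ — the two limits exist by the result cited just after Definition \ref{def:1} — preserves the inequality and yields $h_{ind}(X)\leq h_{ind}(Y)$.

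There is no real obstacle here: the entire content is the containment $\widehat{X}\subseteq\widehat{Y}$, which is essentially built into the definition of the multi-choice shift space, together with the triviality that a maximum over a subset is bounded by a maximum over a superset. The only points deserving a moment's care are the nonemptiness of the relevant block sets (so that the maxima are defined) and the existence of the defining limits (so that the inequality survives the passage to the limit), both of which are already available.
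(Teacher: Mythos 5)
Your proof is correct and follows essentially the same route as the paper's: the paper simply asserts $B_{n}(\widehat{X})\subseteq B_{n}(\widehat{Y})$ and notes that the maximum in the definition is then taken over a larger set, while you additionally supply the (easy but worth recording) justification that $X\subseteq Y$ implies $\widehat{X}\subseteq\widehat{Y}$ and hence the block containment. The extra care about nonemptiness and existence of the limits is fine but not a substantive difference.
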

The independence entropy is not invariant under topological conjugacy, but we can extract an invariant of conjugacy from it as follows. 
\begin{definition}
Consider the supremum of independence entropy over the collection of shift spaces that are isomorphic to the given one $X.$ We define the supremum independence entropy of $X$ by 
\begin{equation*}
\sup h_{ind}\left( X\right) =\sup \left\{ h_{ind}\left( Y\right) |\text{ }
Y\simeq X\text{ }\right\}.
\end{equation*}
\end{definition}
This is much less obvious to compute, but it is some number bounded above by the topological entropy, and it is a non-trivial invariant.
\section{Supremum independence entropy of one-dimensional sofic shift spaces} \label{sec4}
In this section we prove Theorem \ref{thm:main1}. Given a sofic shift $X$ and $\varepsilon > 0$, we produce a topological equivalent $Y$ with $h_{ind}(Y)> h(X)-\varepsilon$.\\
The idea is to find a collection of words $\Gamma=\Gamma_K$ of length $\eta=\eta_K$ with the following properties.\\
\begin{enumerate}
\item $\Gamma$ captures most of entropy, in the sense that for $k$ large \begin{center}
$\dfrac{\ln |\Gamma_K|}{\eta_K}>h(X)-2\varepsilon$.
\end{center} 
\item Two different words in $\Gamma$ never overlap.
\end{enumerate}
Having found such $\Gamma$ we define a new alphabet, \begin{center}
$\textbf{A}=\mathcal{A}\cup \{*\} \cup W_K,$
\end{center}
 where $W_K=\{\overline{w}\,|\, w\in \Gamma\}$ is a formal set of symbols representing the words in $\Gamma $. We embed $X$ into $Y\subseteq \textbf{A}^{\mathbb{Z}}$, by replacing every occurrence of a word $w \in \Gamma$ by the sequence $\overline{w}**\ldots*.$ This configuration  yields at least $h(X)-2\varepsilon$ in independence entropy for $K$ large, because the substitution of any $\overline{w} \in W_K$ gives rise to a legal word in $Y$.
\begin{proof} [Proof of Theorem \ref{thm:main1}] 
 Let $X$ be a sofic shift space over an alphabet $\mathbf{A}$ with $h(X)>0.$ Since $h_{ind}(Y) \leq h(Y)$ for every topologically conjugate shift, $\sup h_{ind}\left(X\right) \leq h(X).$ Let $\varepsilon >0$ be given. We will establish our proof by constructing a shift space $Y$ which is topologically conjugate to $X,$ with $h_{ind}(Y)>h(X)-\varepsilon .$
 
   Let $\mathcal{G}=\left(G,\mathcal{L} \right) $ be a minimal right-resolving presentation so that $X=X_{\mathcal{G}}.$ By Theorem \ref{444} there exists an irreducible sub-graph $H$ of $G$ such that $h(X_{G})=h(X_{H}) ,$ and therefore there is a minimal right-resolving presentation $\mathcal{H}
=\left( H,\mathcal{L} \right) ,$ so that $X_{\mathcal{H}}\subseteq X_{\mathcal{G}}$ is an irreducible sofic shift space.
  Suppose that $H$ has $v$ vertices.  Let $\rho $ be the length of the longest path between any two vertices in $H.$ By Theorem \ref{thm 431} and Proposition \ref{thm113}, $h\left( X_{\mathcal{G}}
\right) =h(X_{G})=h(X_{H})=h(X_{\mathcal{H}})=\ln \lambda ,$ where $\lambda $ is the Perron eigenvalue of the adjacency matrix $A$ corresponding to the graph $G.$  By Proposition 4.2.1 in \cite{LM1}, there is an $\alpha >0$ such that $|B_{k}(X_{H})|>\alpha \lambda ^{k}$ for $k\geq 1.$  Any labelled path in $B_{k}(X_{\mathcal{H}})$ has at most $v$ presentations in $B_{k}(X_{H})$ since $\mathcal{H}$ is right-resolving. Hence
 \begin{equation}\label{eq4.1}
\left\vert B_{k}(X_{\mathcal{G}})\right\vert\geq \left\vert B_{k}(X_{\mathcal{H}})\right\vert \geq \frac{1}{v}\left\vert
B_{k}(X_{H})\right\vert \geq \frac{\alpha }{v}\lambda ^{k}.
\end{equation}
 Since 
$\frac{n}{\ln \left( n\right) }\rightarrow \infty $ as $n\rightarrow \infty $
we can choose $N_{0}$ such that for all $n>N_{0},$ there exists $k,$ so that
\begin{equation} \label{eqn:kn_consts}
\frac{\ln( \frac{v\rho^2}{\alpha}n)}{(1-\varepsilon)\ln\lambda} < k < \frac{\varepsilon n - 2 \rho}{2}. 
\end{equation}
 Now fix $n$ and $k$ that satisfy \ref{eqn:kn_consts}, and let $M\in B_{n}\left( X_{\mathcal{H}}\right) $. Since $k< n,$ $M$ has $n-k+1$
sub-blocks of length $k.$ From \ref{eqn:kn_consts} and \ref{eq4.1}, we have  $|B_{k}( X_{\mathcal{H}})|>\frac{\alpha }{v}\lambda ^{k}>n-k+1,$ and therefore there will be a block $C\in
B_{k}( X_{\mathcal{H}})$ that does not appear as a sub-block of $M.$ Since $ X_{\mathcal{H}}$ is an irreducible shift space, there exists a block $S$ with length $l\leq \rho $ such that $MSC\in B\left(  X_{\mathcal{H}}\right) .$\\

 Let us enumerate  $B_{n-2k-2\rho }( X_{\mathcal{H}})=\{F_j\}$. For every such $F_j$ fix, once and for all, a path $F_j'$ in $H$ representing it in the sense that $\mathcal{L}(F_j')=F_j$. Similarly fix  presentations $C'$ of $C$, $M'$ for $M$ and $S'$ for $S$.
 
  For every $F_j \in B_{n-2k-2\rho }( X_{\mathcal{H}})$, let us fix  paths $L'_j,R'_j$ of lengths $\ell_j,r_j \leq \rho$ respectively concatenating it with $C'$. Thus any path of the following form
\begin{equation*}
\cdots C'L'_jF'_jR'_jC' \cdots ,
\end{equation*} 
represents a legal word in $B(X_{\mathcal{H}})$.

For $s,t \leq \rho$, let
\begin{center}
$\Upsilon_{s,t}=\{ F_i \in B_{n-2k-2\rho }( X_{\mathcal{H}})\:\vert\: \ell_j=s,\:r_j=t\},$
\end{center}  
and choose $\ell,r \leq \rho$ such that $|\Upsilon_{\ell,r}|\geq|\Upsilon_{s,t}|$, for all $0\leq s,t\leq\rho.$
 By the Pigeon-hole Principle and \ref{eq4.1},\begin{equation}
|\Upsilon_{\ell,r}|\geq\left\vert\frac{B_{n-2k-2\rho }(X_{\mathcal{H}})}{\rho^{2}}\right\vert > \frac{\alpha }{v\rho^{2}}\lambda
^{n-2k-2\rho }.
\end{equation}

For $K\in \mathbb{Z}$, set $\eta_K=(n-k-2\rho +\ell + r)K +n+k+l$. Let $\Gamma _{K}\subseteq B_{\eta_K}(X_{\mathcal{H}})$ be the collection of all blocks of the form:\begin{equation*}
 \Gamma _{K}=\{w\in B_{\eta_K}(X_{\mathcal{H}})\: |\:w=MSCL_1F_{1}R_1CL_2F_{2}R_2C\cdots
CL_KF_{K}R_KC\},
\end{equation*}
where, for all $i:1\leq i \leq K$, $F_i \in \Upsilon_{\ell,r}$. 

Note that our prior explicit choice for paths representing such words, ensures that every choice of $\{F_i \in \Upsilon_{\ell,r} \:|\: 1\leq i\leq K \} \subseteq \Upsilon_{\ell,r}^K$ gives a distinct legal word in $\Gamma_K$. Yielding a bijection \begin{center}
$\Upsilon_{\ell,r}^K\eqsim \Gamma_K \subseteq B_{\eta_K}(X).$
\end{center}

Hence, using \ref{eqn:kn_consts}, the collection  $\Gamma _{K}$ of blocks $w$ with the required
form has cardinality
\begin{equation}
\left\vert \Gamma _{K}\right\vert > \left( \frac{\alpha }{v\rho^{2}}\lambda
^{n-2k-2\rho }\right)^K >\left( \frac{\alpha }{v\rho^2}\lambda ^{\left(1-\varepsilon \right) n}\right) ^{K}.
\end{equation}

 Consider each block $w \in \Gamma_{K}$ as a symbol, say $\overline{w}.$ Let  $W_{K}=\{\overline{w} | w \in \Gamma_{K}\}$ be the set of symbols representing the words in $\Gamma_{K}$. Define a new alphabet $\mathbf{A}_{K}$ as the disjoint union: $\mathbf{A}_{K}=\mathbf{A}\cup \{\ast \}\cup W _{K}.$ Define a map $\Psi :X\rightarrow \left( \mathbf{A}_{K}\right) ^{\mathbb{Z}}$ as follows.
 \begin{eqnarray*}
\Psi (x)_{i} =  \left\{ \begin{array}{ll} {\overline{w}} , & \text{If  $w$  occurs in $x$ at $i$ }, \\
\ast, &  \text{If $w$ occurs in $x$  at $i-j$ for some $1\leq
j\leq (\eta_K-k-1)$ } ,\\
x_i,& \text{Otherwise.}\end{array} \right. \\
\end{eqnarray*}
 We will show that the map $\Psi $ is well defined. Let $w,w' \in \Gamma_{K}$ and assume these two blocks overlap in the sense that $w_{[q+1,\eta_K]}=w'_{[1,\eta_K-q]}.$ If $l+k \leq q\leq \eta_K -k-1$ then a comparison of the two blocks immediately shows that $M$ contains $C$ as a sub-block and yields a contradiction.
If $1\leq q \leq l+k-1.$ Then the block $D=MSC$ is $q-$periodic as is obvious from the comparison of the overlapping words below (Figure 1). Setting $|D|=n+l+k=(s+2)q+i$ for some integers $s,i$, $q$-periodicity implies that  $D_{[i+1,i+k]}=C$  and this occurrence of $C$ is contained in $M$ which is a contradiction. 
 \begin{figure}[h]\label{figure}
\begin{tabular}{|c|l|c|c|c|}
$d_1d_2\cdots d_q$ & $d_{q+1}\cdots d_{2q}$ &$\cdots$  &$d_{(s+1)q+1} \cdots d_{n+i}$ & $d_{n+i+1}\cdots  d_{n+l+k}$\\
 &$d_1d_2\cdots d_q$ & $\cdots $ &$\underset{d_1d_2\cdots d_q}{\underbrace{ d_{sq+1}\cdots  d_{(s+1)q}}}$  &$\underset{d_1d_2\cdots d_i \cdots}{\underbrace{ d_{(s+1)q+1} \cdots \cdots}}$
\end{tabular}
\caption{The comparison of the two overlapping words.}
\end{figure}
  
  As a sliding block code, the map $\Psi $ is  continuous and commutes with the shift map. By definition it is injective. As $X$ is compact and $\left( \mathbf{A}_{K}\right) ^{\mathbb{Z}}$ is Hausdorff, $X$ and $Y_{K}=\Psi \left( X\right) $ are topologically conjugate. 

 Let $\widehat{Y}_{K}$ be the multi-choice shift
space corresponding to $Y_{K}$ over the alphabet $\widehat{\mathbf{A}}_{K}.$ Let 
$\widehat{w}_{K}\in B(\widehat{Y}_{K}),$ be given by:\begin{equation*}
\widehat{w}_{K}  =  {W_{K}\underset{(\eta_K -k-1) 
\text{-times}}{\underbrace{\{\ast \}\{\ast\}\ldots \{\ast \}}}}.
\end{equation*}
 Hence, $|\Phi (\widehat{w}_{K})|=\left\vert W_{K}\vert\vert\{\ast \}\vert \ldots
\vert\{\ast \}\right\vert =\left\vert \Gamma _{K}\right\vert. $\\
Therefore,
\begin{eqnarray*}
\sup h_{ind}\left( X\right) &\geq &\underset{ K\rightarrow \infty }{\lim \sup }\: h_{ind}(Y_{K}) \\
&\geq &\limsup_{K\rightarrow \infty }\frac{\max \text{ }\{\ln \text{ }
|\Phi (\widehat{u})|\:|\text{ }\widehat{u}\in B_{\eta_K}(\widehat{Y}
_{K})\}}{\eta_K} \\
&\geq &\underset{K\rightarrow \infty }{\limsup }\frac{\ln |\Phi (%
\widehat{w}_{K})|}{\eta_K}\\
&\geq & \underset{K\rightarrow \infty }{%
\lim \sup }\frac{\ln \left( \frac{\alpha }{v\rho^2}\lambda ^{\left( 1-\varepsilon
\right) n}\right) ^{K}}{(n-k-2\rho +\ell + r)K +n+k+l} \\
&\geq & \underset{K\rightarrow \infty }{%
\lim \sup  }\frac{\ln \left( \frac{\alpha }{v\rho^2}\lambda ^{\left( 1-\varepsilon
\right) n}\right) ^{K}}{(n-k)K+n+k+l} \\
&\geq&\left( 1-\varepsilon \right) \ln \lambda =h(X_{\mathcal{H}})-\varepsilon \ln \lambda =h(X)-\varepsilon \ln \lambda.
\end{eqnarray*}
Since $\varepsilon >0$ was arbitrary, the theorem is proved.
\end{proof}

In our definition of $\sup h_{ind}(X)$,  a natural question that appears  is whether this supremum is achieved by any sofic shift. For full shift spaces the answer is clearly positive. One of referees pointed out to us that the answer for general sofic shifts is no. Indeed, if $X$ is irreducible sofic the  independence entropy is always equal to $\ln (n)/k$ for some integers $n$ and $k$ (\cite{LMP13}, Theorem 2), while there are irreducible sofic shifts whose topological entropy is not of this form, for example the topological entropy for golden mean shift is equal $\ln \frac{1+\sqrt{5}}{2}$. Despite this, it remains interesting to find more examples where the equality holds.
 \begin{problem}
Find interesting examples or even characterize the shift spaces $X$ for which $h(X)=h_{ind}(X)=\sup h_{ind}(X)$.
 \end{problem}
 Note that by \cite{meypav} Theorem 1.1, the topological entropy of such a shift space $X$ will be equal to that of all of its axial powers. Along the same lines if $X$  is any sofic shift, Theorem \ref{thm:main1} provides a topological conjugacy shift $Y$ whose entropy is arbitrary close to that of its axial powers.
 
\section{Supremum independence entropy of general one-dimensional shift spaces} \label{sec5}
We will use the concept of asymptotic pairs to prove Theorem \ref{thm:main2}.
\begin{definition}
 $x,y\in X$ are called an asymptotic pair if they differ on finitely many letters. We call an asymptotic pair non-trivial if they are unequal.
\end{definition}
In \cite{Schmidt95} Proposition 2.1, Klaus Schmidt showed that  a shift space of finite type with positive entropy always has non-trivial asymptotic pairs. We now show that for independence entropy, the analogue of this result  holds in much greater generality.
 \begin{lemma}\label{lem 5}
 Let $X$ be a shift space over the alphabet $\mathbf{A.}$ If $h_{ind}(X)>0,$ then $X$ has non-trivial asymptotic pairs.
  \begin{proof}
 Suppose that $h_{ind}(X)>0.$ Assume that the alphabet $\mathbf{A}$ has some linear order. By $\widehat{X}$ we denote the
multi-choice shift space of $X$. Let $\widehat{z}\left( n\right) \in B_{n}\left( \widehat{X}\right) $ be the word such that:\begin{center} $\left\vert \Phi \left( \widehat{z}\left( n\right) \right) \right\vert =\max \left\{ \left\vert \Phi \left( \widehat{w}\right) \right\vert\:\vert \:\widehat{w}\in B_{n}\left( \widehat{X}\right) \right\} .$\end{center}

Let $\mathcal{R} \subset 2^{\widehat{\mathbf{A}}}$ be the collection of all subsets of $\widehat{\mathbf{A}}$ which contain more than one element. We have  $\mathcal{R}\cap B_1(\widehat{X})\neq\emptyset$, since  otherwise $\left\vert \Phi ( \widehat{w}) \right\vert = \vert\widehat{w}_0\vert.\vert \widehat{w}_1\vert...\vert \widehat{w}_{n-1}\vert=1$ for all $ n \geq 0$ and $ \widehat{w}\in B_n(\widehat{X})$, implying $h_{ind}(X)=0.$ Hence there exists $\widehat{z} \in \widehat{X}$ with  $\widehat{z}_0 \in \mathcal{R}$. Being an allowed word $\widehat{z}\left(n\right) $ appears in some $\widehat{z}$ in $\widehat{X,}$ say $\widehat{z}_{\left[ 0,n-1\right]}=\widehat{z}\left( n\right) .$ Let $i_{0}$  be the lowest non-negative index such that $\widehat{z}_{i_{0}}\in \mathcal{R} .$ Let $x,y\in \Phi \left( \widehat{z}\right) \subseteq X$ be points obtained from $\widehat{z}$ by the following construction:
\begin{eqnarray*}
x_i & = &  {\text{The first element of }} \hat{z}_i,{\text{ for all $i$ }}. \\
y_i & = & \left\{ \begin{array}{ll} {\text{The second element of }} \hat{z}_i, & i=i_0, \\
x_i, & i \ne i_0 .\end{array} \right. \\
\end{eqnarray*}
Hence $x\neq y$ but there is exactly one index for $x_{i}\neq y_{i}.$ So $(x,y)$ is an asymptotic pair.
 \end{proof}
 \end{lemma}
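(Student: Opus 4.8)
The plan is to extract the asymptotic pair directly from a single point of the multi-choice shift $\widehat{X}$, after using positivity of $h_{ind}(X)$ to guarantee that such a point has a coordinate which is not a singleton. The first step is the elementary observation that for a finite word $\widehat{w}=\widehat{w}_0\cdots\widehat{w}_{m-1}$ over $\widehat{\mathbf{A}}$ one has $|\Phi(\widehat{w})|=\prod_{i=0}^{m-1}|\widehat{w}_i|$, since a filling is built by choosing each coordinate independently from the corresponding set; in particular $|\Phi(\widehat{w})|>1$ exactly when some $\widehat{w}_i$ has at least two elements.

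Next I would run the following dichotomy on $B_1(\widehat{X})$. Every coordinate of every block in $B_n(\widehat{X})$ lies in $B_1(\widehat{X})$, because such a block sits inside some point of $\widehat{X}$. Hence if every element of $B_1(\widehat{X})$ were a singleton, then $|\Phi(\widehat{w})|=1$ for all $\widehat{w}\in B_n(\widehat{X})$ and all $n$, which forces $h_{ind}(X)=0$ and contradicts the hypothesis. Therefore $B_1(\widehat{X})$ contains a set $S$ with $|S|\geq 2$, and so there is a point $\widehat{z}\in\widehat{X}$ together with an index $i_0$ such that $\widehat{z}_{i_0}=S$.

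Finally, fixing a linear order on $\mathbf{A}$, I would define $x,y\in\Phi(\widehat{z})$ by letting $x_i$ be the least element of $\widehat{z}_i$ for every $i$, and setting $y_i=x_i$ for $i\neq i_0$ while $y_{i_0}$ is the second-least element of $\widehat{z}_{i_0}=S$. Both points belong to $\Phi(\widehat{z})$, which is contained in $X$ by the definition of the multi-choice shift, so $x,y\in X$; they agree off the single coordinate $i_0$ and differ there, hence $(x,y)$ is a non-trivial asymptotic pair. There is no real obstacle in this argument: the only steps needing attention are the product formula for $|\Phi(\widehat{w})|$, the inference from the fact that $B_1(\widehat{X})$ has a non-singleton member to the existence of a point of $\widehat{X}$ with a non-singleton coordinate, and the observation that every filling of a point of $\widehat{X}$ automatically lies in $X$ — all of which follow immediately from the definitions.
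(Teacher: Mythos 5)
Your proposal is correct and follows essentially the same route as the paper: use positivity of $h_{ind}$ and the product formula $|\Phi(\widehat{w})|=\prod_i|\widehat{w}_i|$ to find a point $\widehat{z}\in\widehat{X}$ with a non-singleton coordinate, then produce two fillings of $\widehat{z}$ in $\Phi(\widehat{z})\subseteq X$ that differ only at that coordinate. If anything, your version is slightly cleaner, since you dispense with the maximizing word $\widehat{z}(n)$ that the paper introduces but never really uses.
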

 A generalization of Schmidt's Theorem (\cite{Schmidt95} Proposition 2.1), to sofic systems now follows from Lemma \ref{lem 5} and Theorem \ref{thm:main1}.
 \begin{corollary}
  A sofic shift space $X$  with positive entropy always has non-trivial asymptotic pairs.
 \end{corollary}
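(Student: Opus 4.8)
The plan is to deduce the corollary by chaining Theorem~\ref{thm:main1} with Lemma~\ref{lem 5}, the only additional ingredient being the elementary fact that a conjugacy transports non-trivial asymptotic pairs. So let $X$ be a sofic shift with $h(X)>0$. First I would invoke Theorem~\ref{thm:main1} to obtain $\sup h_{ind}(X)=h(X)>0$; since $\sup h_{ind}(X)$ is by definition a supremum of the numbers $h_{ind}(Y)$ over all $Y\simeq X$, and this supremum is strictly positive, there must exist a shift space $Y$ with $Y\simeq X$ and $h_{ind}(Y)>0$.

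Next I would apply Lemma~\ref{lem 5} to $Y$, which yields a non-trivial asymptotic pair $(x,y)$ in $Y$ — points with $x\ne y$ and $x_i=y_i$ for all but finitely many $i$. The final step is to push this pair across a conjugacy $\psi\colon Y\to X$. Being a sliding block code (Section~\ref{sec2}), $\psi$ is induced by a block map reading a fixed finite window of coordinates, so if $x$ and $y$ agree outside a finite interval then $\psi(x)$ and $\psi(y)$ agree outside a slightly larger but still finite interval; moreover $\psi(x)\ne\psi(y)$ because $\psi$ is injective. Hence $(\psi(x),\psi(y))$ is a non-trivial asymptotic pair in $X$, which is what the corollary asserts.

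There is no genuine obstacle remaining at this stage: all the substance has been absorbed into Theorem~\ref{thm:main1}, whose proof occupied Section~\ref{sec4}, together with Lemma~\ref{lem 5}. The only point worth a sentence of justification is that ``$X$ has a non-trivial asymptotic pair'' is a conjugacy invariant of shift spaces, and this is immediate from the definitions of a sliding block code (which preserves asymptoticity, by the finite-window argument above) and of injectivity (which preserves non-triviality).
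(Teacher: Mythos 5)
Your proof is correct and follows exactly the route the paper intends: the paper derives this corollary from Theorem~\ref{thm:main1} together with Lemma~\ref{lem 5}, with the (implicit) observation that the existence of non-trivial asymptotic pairs is a conjugacy invariant — the same finite-window argument you spell out, and which the paper itself uses in the proof of Theorem~\ref{thm:main2}.
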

 \begin{theorem}[\cite{Meyerovitch19} Section 5]
There is a shift space that has positive entropy but no asymptotic pairs.
\end{theorem}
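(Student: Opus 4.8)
The plan is to exhibit such an $X$ as a \emph{minimal} subshift enjoying a strong form of topological rigidity, namely \emph{double minimality} in the sense of Weiss: a minimal system $(X,\sigma)$ is doubly minimal if every point of $X\times X$ whose two coordinates lie on distinct $\sigma$-orbits has dense orbit under $\sigma\times\sigma$. By Weiss's theorem, every aperiodic ergodic measure-preserving $\mathbb Z$-system admits a doubly minimal model, and this model can be realized as a minimal (hence infinite, aperiodic) subshift. Applying this to a Bernoulli shift of positive entropy yields a minimal subshift $X$ carrying a $\sigma$-invariant measure of positive entropy, so the variational principle gives $h(X)>0$. Thus the only nontrivial input is external, and what remains is the short argument below.

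The claim is that an infinite doubly minimal subshift $X$ has no nontrivial asymptotic pair. Suppose $x\neq y$ in $X$ are asymptotic and put $F=\{i:x_i\neq y_i\}$, a nonempty finite set. First, $y$ cannot lie on the orbit of $x$: if $y=\sigma^{k}x$ with $k\neq0$ then $x_i=x_{i+k}$ off a finite set, so $\sigma^{nk}x$ converges as $n\to\infty$ to a $\sigma$-periodic point of $X$, which is impossible since an infinite minimal subshift has no periodic points. Hence double minimality forces $\overline{\{(\sigma^{n}x,\sigma^{n}y):n\in\mathbb Z\}}=X\times X$. But for every $n$ the two coordinates of $(\sigma^{n}x,\sigma^{n}y)$ differ exactly on $F-n$, so on any window $[-R,R]$ they differ in at most $|F|$ places; passing to limits window by window, every $(u,v)$ in $\overline{\{(\sigma^{n}x,\sigma^{n}y):n\in\mathbb Z\}}$ satisfies $\#\{i:u_i\neq v_i\}\leq|F|<\infty$. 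Since an infinite minimal subshift is uncountable while the set of points asymptotic to a given one is countable, $X$ contains two points differing in infinitely many coordinates; that pair lies in $X\times X$ but not in $\overline{\{(\sigma^{n}x,\sigma^{n}y):n\in\mathbb Z\}}$, contradicting the previous line. Hence $X$ has no nontrivial asymptotic pair, and the theorem follows.

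The hard part is really the existence statement, and the reason a self-contained combinatorial construction is delicate is the direct conflict between the two requirements: positive entropy forces exponentially much ``local freedom'' in the admissible blocks, yet any choice that can be exercised inside a finite window produces a nontrivial asymptotic pair. One is therefore forced to couple the free choices globally, so that changing a configuration on any finite set breaks some constraint, while still exposing exponentially many independent choices in each length-$n$ window. A hands-on attempt would organize this with a hierarchical, Toeplitz-type construction in which the ``entropy coordinates'' at every scale are linked into infinite equality chains; arranging the bookkeeping so that shift-invariance, $h(X)>0$, and the absence of asymptotic pairs hold simultaneously is exactly the obstacle, and it is precisely this that a doubly minimal positive-entropy model supplies for free.
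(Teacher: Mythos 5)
The deduction in your second paragraph is sound: an infinite doubly minimal subshift has no nontrivial asymptotic pairs, since the orbit closure of an off-orbit asymptotic pair $(x,y)$ consists only of pairs differing in at most $|F|$ coordinates and is therefore a proper closed invariant subset of $X\times X$, while the case $y=\sigma^k x$ is ruled out because an infinite minimal subshift has no periodic points. The fatal problem is the existence step, which you describe as external and ``for free.'' Weiss's realization theorem does not say that every aperiodic ergodic system admits a doubly minimal model; it says that every ergodic system of \emph{zero entropy} admits one, and the same paper of Weiss proves the complementary fact that every doubly minimal system has zero topological entropy (double minimality is the topological analogue of minimal self-joinings, a property incompatible with positive entropy). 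In particular there is no doubly minimal subshift of positive entropy: applying the theorem ``to a Bernoulli shift of positive entropy'' is precisely the case in which it is false, so your construction produces nothing, and by the variational principle no candidate of this type can ever have $h(X)>0$.

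Thus the proposal concentrates the entire difficulty into the one step it outsources, and the tool it outsources it to is unavailable in the positive-entropy regime. The paper does not reprove the statement either; it cites the explicit construction of \cite{Meyerovitch19}, Section 5, which is a hands-on hierarchical construction of exactly the kind you gesture at in your closing paragraph: the entropy-producing coordinates are coupled along infinite chains so that no modification supported on a finite set remains admissible, while exponentially many patterns still occur in every length-$n$ window. If a proof rather than a citation is wanted, that combinatorial construction (or an equivalent one) must actually be carried out; no soft argument based on a strong topological rigidity property such as double minimality can succeed, because such rigidity forces zero entropy.
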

\begin{proof}[Proof of Theorem \ref{thm:main2}]
Let $X$ be the shift space constructed in (\cite{Meyerovitch19} section 5), with\emph{\ }$h(X)>0,$ and no asymptotic pairs. Since the
existence of asymptotic pairs is invariant, any topologically conjugate $Y\backsimeq X,$ will have no asymptotic pairs. By Lemma \ref{lem 5}, $h_{ind}(Y)=0,$ for all such $Y.$ Hence $\sup h_{ind}(X)=0.$
\end{proof}
Thus the equality $h(X)=\sup h_{ind}(X)$, which holds for sofic shifts does not hold in general. It is interesting  to attain a better  understanding exactly where the line passes. Two natural test cases that where suggested to us by the referees are sub-shifts with \emph{specification property} (specification property states that there exists $N$ so that for any words $v,w \in B(X)$, there exists $u$ with length $N$ so that $vuw \in B(X)$),  and \emph{Coded sub-shifts.}  \\ 
Concerning sub-shifts that have the specification property. It was noted by the referee that the proof of our Theorem \ref{thm:main1} works almost verbatim in this case.\\

\textbf{Acknowledgements 1.}
This work is a part of the Master's thesis of the author,
conducted at Ben-Gurion University of the Negev under the supervision of Yair Glasner and Tom Meyerovitch. The author thanks his advisers for their patience, support and knowledge. The author also extends his sincere thanks to Ben-Gurion University for financial support and research funding.\\
The author was partially supported by ISF grant 2919/19 and ISF grant
1052/18.\\

\textbf{Acknowledgements 2.}
The author would like to thank the two referees for reading the first and second drafts and making comments and  suggesting constructive changes.

\newpage
\bibliographystyle{alpha}

\bibliography{bashir}

%\begin{thebibliography}{elsarticle-num}
%
%\bibitem[LMP13]{LMP13}
%Louidor, Erez; Marcus, Brian and Pavlov, Ronnie.
%\newblock {\em Independence entropy of $\Z^d$-shift spaces. }
%\newblock  Acta Appl. Math. 126 (2013). pages 297--317.
%
%\end{thebibliography}
%
\bigskip
\noindent {\sc Bashir Abu Khalil.} Department of Mathematics.
Ben-Gurion University of the Negev.
P.O.B. 653,
Be'er Sheva 84105,
Israel.\\
{\tt bashira@post.bgu.ac.il}\\
{\tt mathbash@gmail.com}

\end{document}